\newtheorem{thm}{Theorem}[section]
\newtheorem{lemma}[thm]{Lemma}
\newtheorem{cor}[thm]{Corollary}
\theoremstyle{definition}
\newtheorem{defn}[thm]{Definition}
\newtheorem{rmk}[thm]{Remark}
\newcommand{\cT}{\mathcal{T}}
\newcommand{\tri}{\bigtriangleup}
\title[Connectivity of triangulations without degree one edges]{Connectivity of triangulations without degree one edges under 2-3 and 3-2 moves}
\author{Henry Segerman}
\thanks{The author was supported in part by National Science Foundation grant DMS-1308767.}
\begin{document}

\begin{abstract}
Matveev and Piergallini independently showed that, with a small number of known exceptions, any triangulation of a three-manifold can be transformed into any other triangulation of the same three-manifold with the same number of vertices, via a sequence of 2-3 and 3-2 moves. We can interpret this as showing that the Pachner graph of such triangulations is connected. In this paper, we extend this result to show that (again with a small number of known exceptions), the subgraph of the Pachner graph consisting of triangulations without degree one edges is also connected, for single-vertex triangulations of closed manifolds, and ideal triangulations of manifolds with non-spherical boundary components.
\end{abstract}

\maketitle

\section{Introduction}

Matveev~\cite{matveev_2-3_connectivity_paper, matveev} and (independently) Piergallini~\cite{piergallini} show that the set of triangulations of a  three-manifold is connected under 2-3 and 3-2 moves, as long as we ignore the small number of triangulations of manifolds that consist of only a single tetrahedron. For these, no 2-3 or 3-2 move can be applied. Said another way, we consider the \emph{Pachner graph} $\mathfrak{T}(M)$, whose vertices are the triangulations of a given manifold $M$, where two vertices are connected by an edge if the corresponding triangulations are related by a 2-3 move. Matveev's and Piergallini's result then says that $\mathfrak{T}(M)$ is connected, again if we ignore the one-tetrahedron triangulations.

A natural question to ask is whether this remains true when we impose conditions on the triangulations. In other words we consider the connectivity of the subgraph of $\mathfrak{T}(M)$ corresponding to a given property. For example, we could consider the subgraph of geometric triangulations. Hoffman noted that the subgraph of geometric triangulations of the figure 8 knot complement is not connected; in fact there are isolated geometric triangulations (this observation is mentioned in \cite{dadd_duan}). However, geometricity is a very strong property. There are many weaker properties, corresponding to larger subgraphs, which may be connected. Of particular interest are the properties of \emph{1-efficiency} and of \emph{having essential edges}. The \emph{3D index} is a recent quantum object mapping ideal triangulations to Laurent series, introduced by Dimofte, Gukov, and Gaiotto \cite{dimofte_gaiotto_gukov_1, dimofte_gaiotto_gukov_2}. Garoufalidis \cite{garoufalidis_3D_index} showed that the 3D index is invariant under 2-3 and 3-2 moves, provided that it is defined on both sides of the move. In \cite{ghrs_index_structures}, Garoufalidis, Hodgson, Rubinstein and the author show that the 3D index is defined only on 1-efficient triangulations. Therefore, if we knew that the subgraph of $\mathfrak{T}(M)$ corresponding to 1-efficient triangulations were connected, we would have an invariant of the manifold. Unfortunately this is not known. Also in \cite{ghrs_index_structures}, we define an extremely small subgraph of the 1-efficient triangulations that we can prove is connected, and since the subgraph is determined purely by the topology of the manifold, we have an invariant. However, this is a somewhat unsatisfying workaround. A very similar story holds for the \emph{1-loop invariant}, as defined by Dimofte and Garoufalidis \cite{dimofte_garoufalidis}. In this case, the triangulations are required to have solutions to Thurston's gluing equations corresponding to the complete hyperbolic structure, which is implied by the triangulation having essential edges. Again it is not known if the triangulations with essential edges form a connected subgraph of $\mathfrak{T}(M)$.

In this paper, we answer the connectivity question in the affirmative for the subgraph of triangulations with the property of \emph{having no degree one edges}.

\begin{thm}
Let $M$ be an oriented three-manifold other than the lens space L(4,1). If $M$ is closed, let $\mathfrak{T}(M)$ denote the set of single vertex triangulations of $M$. If $M$ has boundary, let  $\mathfrak{T}(M)$ denote the set of ideal triangulations of $M$. Exclude from $\mathfrak{T}(M)$ any triangulations consisting of a single tetrahedron. Then the subgraph of $\mathfrak{T}(M)$ consisting of triangulations without degree one edges is connected under 2-3 and 3-2 moves.
\label{main theorem}
\end{thm}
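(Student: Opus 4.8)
The plan is to leverage the Matveev--Piergallini theorem directly: since the full Pachner graph $\mathfrak{T}(M)$ is connected (excluding the one-tetrahedron cases), any two triangulations without degree one edges are already joined by \emph{some} path of 2-3 and 3-2 moves, and the whole problem is to modify such a path so that it never passes through a triangulation that \emph{does} have a degree one edge. I will call a triangulation \emph{good} if it has no degree one edge and \emph{bad} otherwise, and I will argue by induction on the number of bad triangulations appearing in a chosen connecting path. The first step is to pin down exactly when a single move can create a degree one edge. A direct bookkeeping of edge degrees shows that a 2-3 move leaves every side edge with strictly larger degree, gives the new central edge degree three, and \emph{decreases} by one the degree of each of the three edges of the triangle being expanded; hence a 2-3 move produces a degree one edge precisely when one of those three edges had degree two. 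Dually, a 3-2 move can produce a degree one edge only when one of the side edges incident to the destroyed degree three edge had degree two. This classification confines the danger to a small, explicit list of local configurations.

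The core of the argument is then a collection of \emph{detour lemmas}: whenever the chosen path crosses from a good triangulation into a bad one, I want to replace that crossing by a sequence of moves that stays inside the good subgraph. Geometrically a degree one edge is a \emph{folded} tetrahedron, two of whose faces are identified along that edge, so the configurations to be avoided are tightly constrained combinatorially. The guiding principle for the detour is to \emph{make room before reducing}: rather than perform the dangerous move directly, I first raise the degree of the offending degree two edge above two by an auxiliary 2-3 move on a nearby face (one for which that edge plays the role of a side edge, and so gains degree), and only then carry out the originally intended move, which now no longer drops any edge to degree one. Since, by the classification above, a 2-3 move creates a degree one edge only through the degree two mechanism, such auxiliary expansions can be chosen to keep us good, and the inserted detour can be unwound on the far side of the former bad triangulation so as to rejoin the original path. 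Pushing this through strictly reduces the number of bad triangulations on the path, which drives the induction.

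Two preliminary points must be settled before the induction can start. First, I must know that good triangulations exist at all and that complexity can be increased \emph{within} the good subgraph; this follows from the observation that, away from the degree two obstruction, 2-3 moves are always safe, so any good triangulation can be freely enlarged, supplying the combinatorial room the detours need. Second, the small cases have to be examined by hand, and this is where the hypotheses enter. The one-tetrahedron exclusion is forced since no move applies there, while the exclusion of $L(4,1)$ should reflect a genuine failure at low complexity, where every sufficiently small triangulation is compelled to carry a degree one edge and there is no way to expand without first passing through a bad triangulation, so that the good triangulations fail to be connected. Verifying that $L(4,1)$ is the \emph{only} such manifold, presumably by a direct analysis or enumeration of the minimal triangulations, is part of this step.

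I expect the main obstacle to be the detour construction in the tightest configurations, namely when the edge that would drop to degree one is itself surrounded by other low degree edges, so that the naive auxiliary 2-3 move threatens to create a \emph{different} degree one edge. Handling these will require a careful finite analysis of the local neighborhood of a folded tetrahedron, exhibiting in each case an explicit good detour; it is precisely here that the $L(4,1)$ exception should surface as the unique local configuration admitting no such detour. A secondary, more bookkeeping difficulty is to ensure that the detour inserted on one side of a bad triangulation can be matched and cancelled on the other side, so that the modified path genuinely reconnects to the original endpoints rather than merely reaching some other good triangulation.
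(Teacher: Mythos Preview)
Your high-level plan matches the paper: start from a Matveev--Piergallini path, classify how degree one edges arise (they come only from degree two edges dropping by one), and replace each crossing into bad territory by a detour. The gap is in the detour mechanism itself.

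You propose to ``make room'' by a single auxiliary 2-3 move that raises the offending edge's degree, then perform the intended move, then ``unwind on the far side.'' But the auxiliary move changes the triangulation, and you give no reason the \emph{subsequent} moves of the original path still apply, or that they do not interact with the extra tetrahedra you inserted. The phrase ``unwind on the far side'' is exactly where the content is, and it is not supplied. The paper's key structural observation, which your proposal lacks, is that the tetrahedron incident to a degree one edge (and hence the one triangle incident to it) is \emph{unchanged for the entire lifetime of that edge}: any 2-3 or 3-2 move touching that tetrahedron would be one of the two classified moves that destroy the degree one edge. Consequently, if one inserts auxiliary material precisely by ungluing that triangle and splicing in a triangulated triangular pillow, every move of the original path still makes sense and leaves the pillow untouched; the pillow can then be removed by the reverse sequence once the degree one edge would have vanished. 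This is what makes the detour genuinely parallel to the original path and removable at the end.

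Second, the paper's pillow is not a single 2-3 move but a specific four-tetrahedron object, two interleaved ``bird beaks,'' inserted via Matveev's V-move and a mandible-rotation move, each realised as a short 2-3/3-2 sequence that is checked never to create a degree one edge. The L(4,1) exception does not come from enumerating small triangulations as you suggest, but from a concrete obstruction inside this insertion procedure: to start a V-move one needs a triangle near the degree two edge whose three edges all have degree at least three, and the search for such a triangle along a stack of tetrahedra fails precisely on an infinite family of triangulations of L(4,1) (odd stacks glued front-to-back with a quarter turn). So the exception is an infinite family of isolated good triangulations, not a low-complexity accident.
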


The restrictions on the manifold being orientable, and the triangulation being either ideal or having only a single vertex are likely not very serious. Most probably, a few other special cases would need to be ruled out. For brevity however, in this paper we restrict to these cases. The restriction ruling out L(4,1) is a special case, similar to the cases of single-tetrahedron triangulations. Most of the triangulations of the manifold L(4,1) that do not have degree one edges are in one large connected component, but there is also an infinite family of isolated triangulations without degree one edges, for which any 2-3 move would introduce a degree one edge. These triangulations have no degree three edges, and so there are no 3-2 moves to consider. We discuss this family in Remark \ref{L41_triangs}.

The property of having no degree one edges is very weak, but is a prerequisite for a triangulation to have essential edges, or to be 1-efficient, or for most other properties of triangulations that have been investigated. We might hope that it will be possible to build up to connectivity of these stronger properties from weaker properties such as having no degree one edges.

\section{Definitions and preliminaries}\label{defns_and_prelims}

\begin{defn}
A \emph{model triangulation} is a set of
identical oriented $3$--simplices together with a set of
orientation-reversing face pairings.  A face cannot be glued to itself, and each face must be paired with another face. We refer to the quotient space after making the identifications given by the face pairings as a \emph{triangulation}. 

The simplices prior to identification, and their vertices, edges, and so on, are
called \emph{model cells}.  The map of a model cell to its image in
the triangulation need not be a homeomorphism -- this occurs whenever a cell is glued to itself in some fashion.

An \emph{ideal triangulation} is the result of removing the vertices from a triangulation, producing a manifold with boundary. For a closed manifold $M$, we define $\mathfrak{T}(M)$ to be the set of single-vertex triangulations. For a manifold $M$ with boundary, we define $\mathfrak{T}(M)$ to be the set of ideal triangulations of $M$, and we assume that $M$ has no spherical boundary components.
\end{defn}

\begin{rmk}
A triangulation as defined above (i.e. with material vertices, rather than an ideal triangulation) is topologically a manifold if and only if the \emph{link} of each vertex (i.e. the boundary of a small closed regular neighbourhood of the vertex) is a sphere.
\end{rmk}

\begin{defn}
The \emph{degree} of an edge $e$, written $\deg(e)$, is the number of model edges that are identified to form $e$.
\end{defn}

In this paper, we will translate freely back and forth between a triangulation and its dual \emph{special spine}. 

\begin{defn}
A \emph{spine} of a manifold $M$ with non-empty boundary is a compact subpolyhedron $P$, such that $M$ collapses to $P$. For a closed manifold $M$, a spine is a spine of the complement of an open ball in $M$. If we are given a triangulation of a manifold, we obtain the dual (special) spine by inserting into each tetrahedron a \emph{butterfly}. See Figure \ref{butterfly}. A spine is \emph{simple} if every point on a spine has a neighbourhood homeomorphic to one of the pictures shown in Figure \ref{simple_spine_points}. 

\begin{figure}[htb]
\centering
\subfloat[A butterfly inside of a tetrahedron.]{
\includegraphics[width=0.2\textwidth]{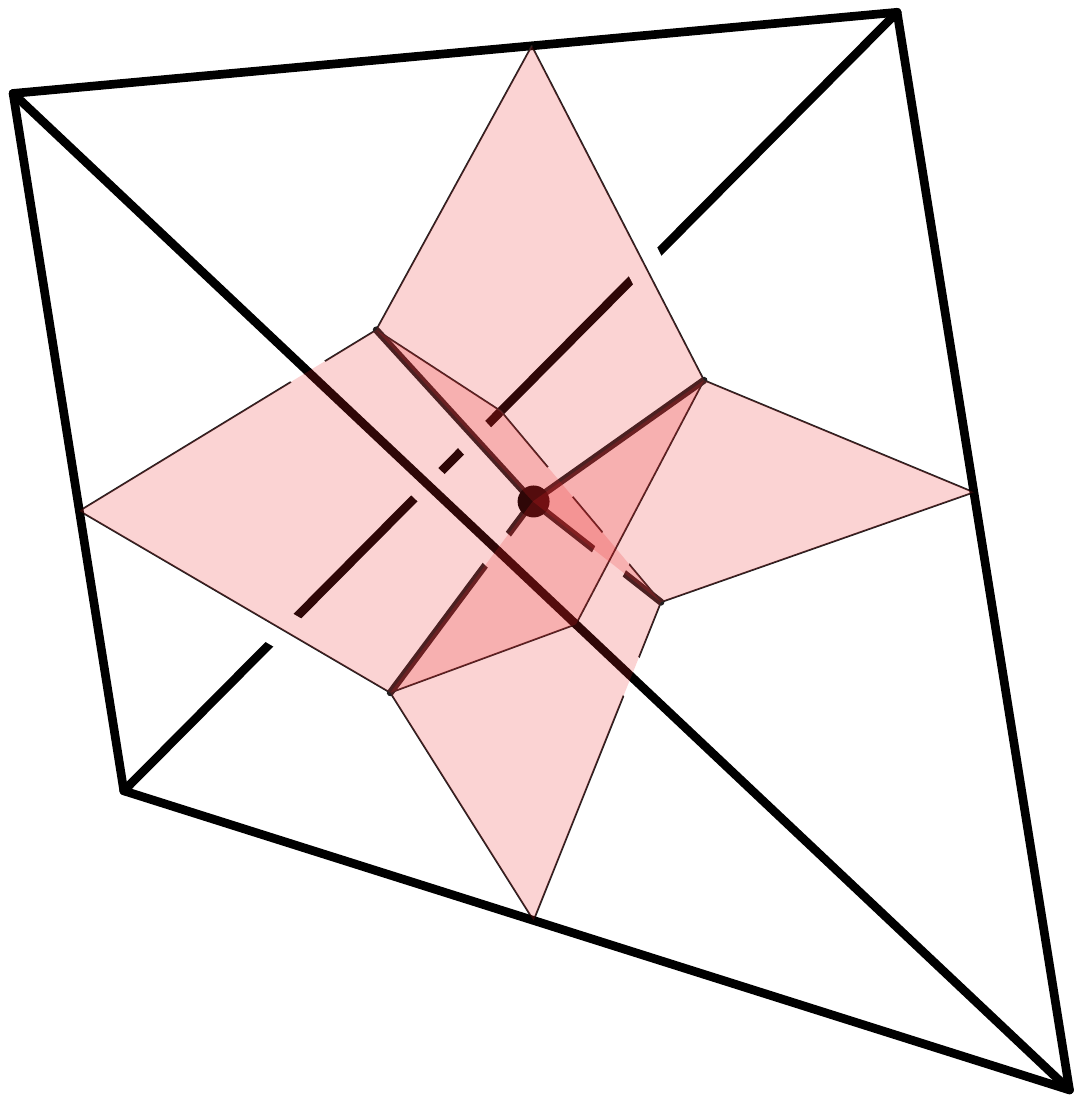}
\label{butterfly}
}
\quad
\subfloat[Three different types of point on a simple spine. From left to right: a non-singular point, a triple point, and a true vertex.]{
\includegraphics[width=0.65\textwidth]{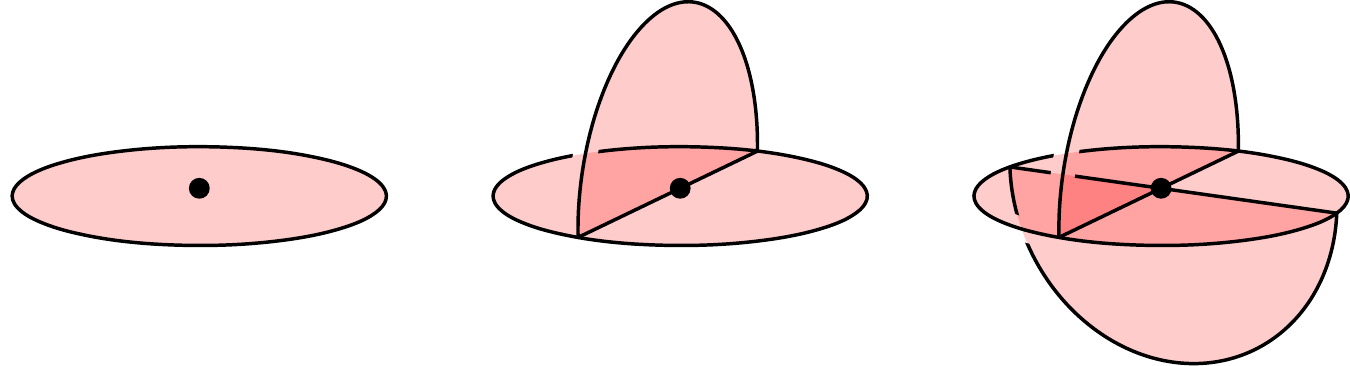}
\label{simple_spine_points}
}
\caption{} 
\label{spine_definitions}
\end{figure}

A connected component of the set of non-singular points is a \emph{2-stratum}. A connected component of the set of triple points is a \emph{1-stratum}. A simple spine is \emph{special} if each 2-stratum is an open disk and each 1-stratum is an open interval. 
\end{defn}
For further details on special spines and the moves on them described in this paper, see Matveev's book, \emph{Algorithmic topology and classification of 3-manifolds}~\cite{matveev}.

\begin{defn}
Let $\cT$ be a triangulation with at least two tetrahedra. A \emph{2-3 move} can be performed on any pair of distinct tetrahedra of $\cT$ that share a triangular face $\tri$. We remove $\tri$ and the two tetrahedra, and replace them with three tetrahedra arranged around a new edge, which has endpoints the two vertices not on $\tri$. See Figure \ref{2-3}. A \emph{3-2 move} is the reverse of a 2-3 move, and can be performed on any triangulation with a degree three edge $e$, where the three tetrahedra incident to $e$ are distinct (i.e. the three model edges that are identified to form $e$ are edges of distinct model tetrahedra).
\end{defn}

\begin{figure}[htb]

\centering
\subfloat[The 2-3 and 3-2 moves.]{
\labellist
\small\hair 2pt
\pinlabel 2-3 at 380 395
\pinlabel 3-2 at 380 314
\endlabellist
\includegraphics[width=0.44\textwidth]{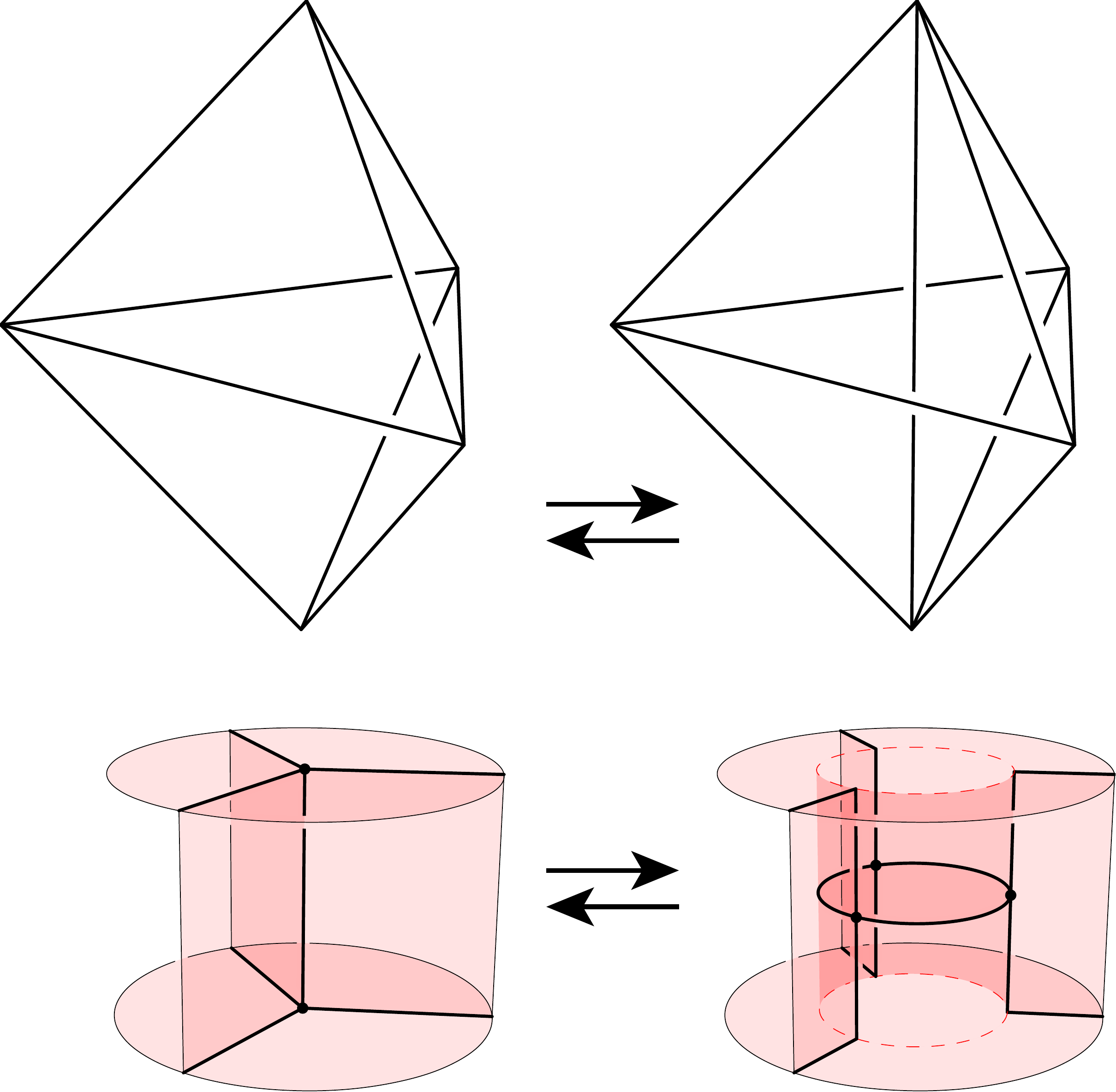}
\label{2-3}}
\qquad
\subfloat[The 0-2 and 2-0 moves.]{
\labellist
\small\hair 2pt
\pinlabel 0-2 at 339 539.5
\pinlabel 2-0 at 339 452.5
\endlabellist
\includegraphics[width=0.39\textwidth]{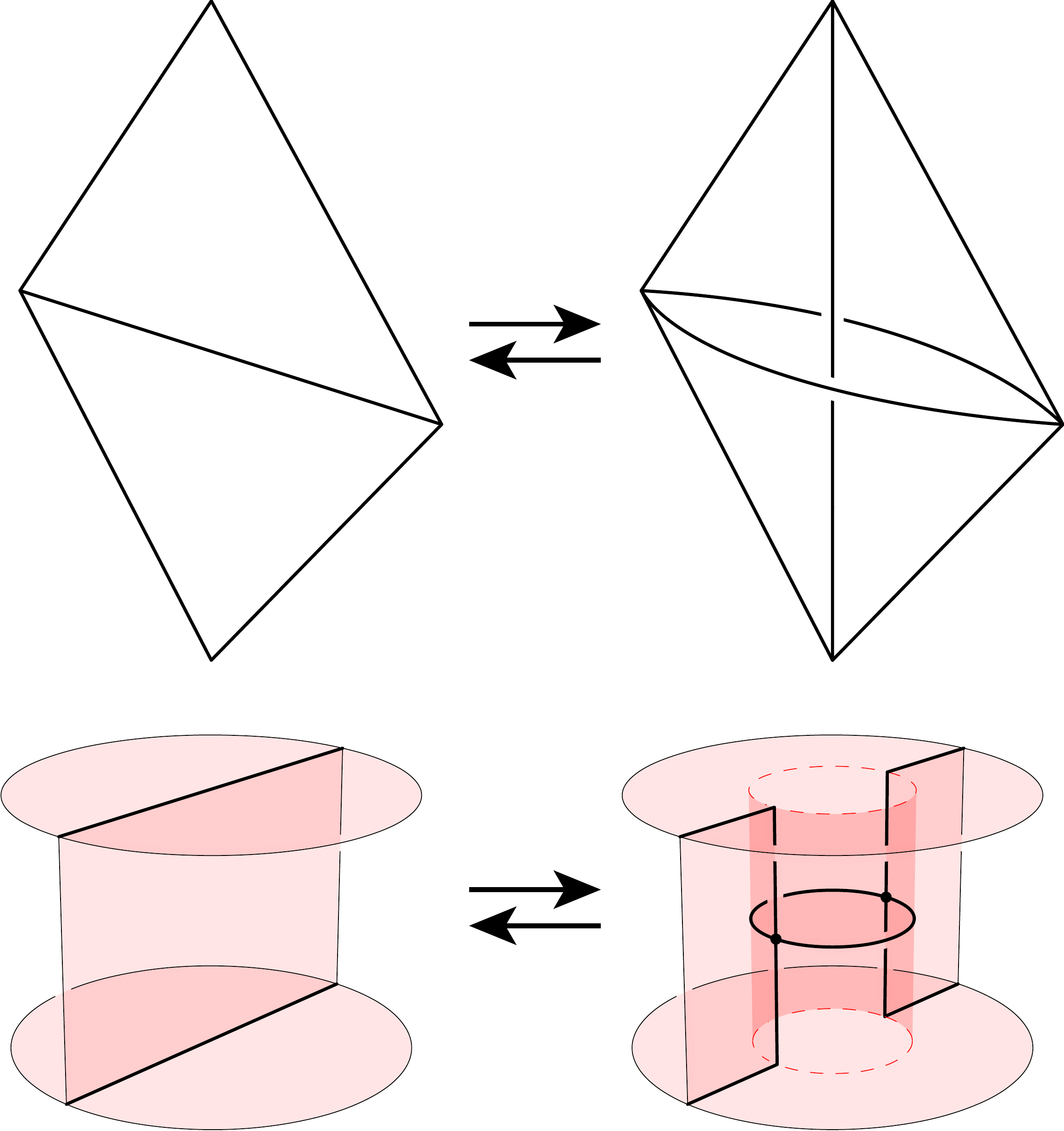}
\label{0-2}
}
\caption{Moves on (topological) triangulations, and their dual special spines.}
\label{2-3 and 0-2 moves}
\end{figure}

\begin{figure}[htb]
\centering
\labellist
\small\hair 2pt
\pinlabel $e$ at 121 145
\pinlabel 0-2 at 274 147
\endlabellist
\includegraphics[width=0.5\textwidth]{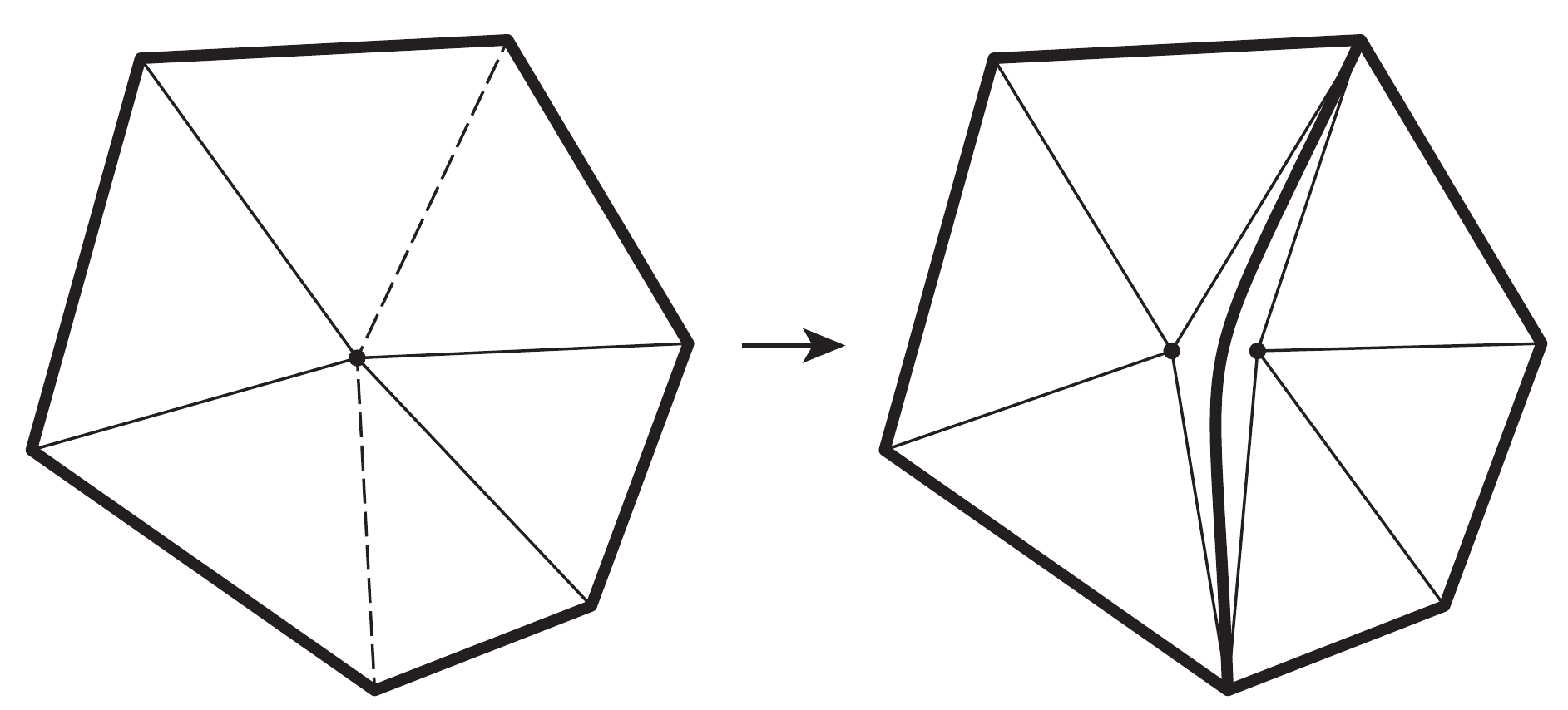}
\caption{The 0-2 move, shown acting on a cross-section through the book of tetrahedra around the edge $e$. Edges in the plane of the cross-section are drawn with a thick line, while intersections with triangular faces are drawn with a thin line. The intersections with the two triangular faces the move is performed on are drawn with a dashed line. One half book is on the left, the other on the right.}
\label{0-2_cross-section}
\end{figure}

The focus of this paper is on connectivity of triangulations under the 2-3 and 3-2 moves. In order to organise the argument we will also consider a number of other moves, including the 0-2 and 2-0 moves, and (later) the V-move.

\begin{defn}
\label{0-2_move}
Let $\cT$ be a triangulation. A \emph{0-2 move} can be performed on any pair of distinct triangular faces of $\cT$ that share an edge $e$. Around the edge $e$, the tetrahedra of $\cT$ are arranged in a cyclic sequence, which we call a \emph{book of tetrahedra}. (Note that tetrahedra may appear more than once in the book.) The two triangles and $e$ separate the book into two \emph{half--books}. We unglue the tetrahedra that are identified across the two triangles, duplicating the triangles and also duplicating $e$. We glue into the resulting hole a pair of tetrahedra glued to each other in such a way that there is a degree two edge between them. See Figure \ref{0-2}. In Figure \ref{0-2_cross-section} we see the effect of the 0-2 move on the book of tetrahedra around $e$. The figure shows a cross-section through the book, perpendicular to $e$. 
A \emph{2-0 move} is the reverse of a 0-2 move, and can be performed on any triangulation with a degree two edge, where the two tetrahedra incident to that edge are distinct, and the two edges opposite the degree two edge are not identified. 
\end{defn}

\begin{rmk}
The 0-2 move is also called the {\em lune} move in the dual language of special spines. 
\end{rmk}

\begin{thm}[Matveev~\cite{matveev, matveev_2-3_connectivity_paper}, Piergallini~\cite{piergallini}] \label{connectivity} 
Let $\cT$ and $\cT'$ be two triangulations of a manifold, both of which have at least two tetrahedra, and which have the same number of vertices as each other. Then $\cT$ and $\cT'$ are connected by a sequence of 2-3 and 3-2 moves.
\end{thm}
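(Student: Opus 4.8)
The plan is to pass to the dual picture of special spines, where the statement is cleaner, and then to split the argument into a \emph{stable} connectivity result that is permitted to use lune (0-2) moves, followed by the genuinely hard step of eliminating those lune moves. Under duality a triangulation with $n$ tetrahedra corresponds to a special spine with $n$ true vertices, a 2-3 move corresponds to the Matveev--Piergallini $T$-move, and a 0-2 move corresponds to the lune move. The key bookkeeping observation is that all four moves 2-3, 3-2, 0-2 and 2-0 preserve the number of vertices of the triangulation (equivalently, they do not alter $M$ or its boundary pattern), so the vertex-count hypothesis is respected automatically throughout. Since every compact three-manifold admits a special spine (Casler), both $\cT$ and $\cT'$ give honest special spines of $M$.

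First I would establish the stable statement: any two special spines of $M$ are related by a sequence of $T^{\pm 1}$ moves and lune moves $L^{\pm 1}$. The idea is that any two spines of a fixed compact manifold are connected by a sequence of elementary expansions and collapses that stay within the class of simple polyhedra; this is a general-position argument carried out in $M \times I$, comparing the two spines to a common spine obtained by further collapsing. Once each such elementary expansion or collapse is arranged so that the intermediate polyhedron remains special, it can be realized by $T$ and $L$ moves. Dually, this shows that $\cT$ and $\cT'$ are connected using only 2-3, 3-2, 0-2 and 2-0 moves, with the vertex count fixed at every stage.

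The hard part will be removing the lune moves from this sequence while staying among special spines (triangulations with at least two tetrahedra). A lune move and its inverse change the true-vertex count by $\pm 2$, but $\cT$ and $\cT'$ have equal counts, so the lunes organise into a balanced collection that one hopes to cancel. The main technical tool would be a set of local relations describing how a lune commutes past a $T$-move up to further $T$-moves, together with the key ``exchange'' identity showing that a cancelling pair $L L^{-1}$ occurring in the presence of at least one further true vertex can be rewritten using $T$-moves alone. With these in hand I would slide the lune moves toward one another, cancel them in pairs, and thereby extract a sequence of $T$-moves only. This is exactly where the hypothesis of at least two tetrahedra is essential: it supplies the extra room needed to perform the exchange, and its failure is precisely why single-tetrahedron triangulations (and, in the paper's main theorem, the exceptional family of $L(4,1)$ triangulations) must be excluded.

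I expect this lune-elimination step to be the main obstacle, both because the local relations require a careful case analysis of how a lune sits relative to its neighbouring true vertices, and because one must verify that the sliding-and-cancelling procedure terminates and never passes through a spine with fewer than two true vertices. By contrast, the stable-equivalence step is comparatively soft: it rests on standard collapsing theory for simple spines and does not involve the delicate vertex-count bookkeeping that makes the second step subtle.
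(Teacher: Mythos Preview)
The paper does not prove this theorem at all: it is stated as a cited result of Matveev and Piergallini and then used as a black box in the proof of Theorem~\ref{main theorem}. So there is no ``paper's own proof'' against which to compare your proposal.

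That said, your outline is a reasonable sketch of the strategy actually used in the original sources (especially Matveev's treatment): dualise to special spines, establish connectivity under $T$-moves and lune moves via collapsing/general-position arguments, then eliminate the lunes using local exchange relations, with the two-tetrahedron hypothesis providing the extra true vertex needed for the exchange. As a proof plan it is credible but far from complete; the lune-elimination case analysis is genuinely delicate, and you have only named it rather than carried it out.

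One small correction: your parenthetical linking the $L(4,1)$ exclusion in Theorem~\ref{main theorem} to the failure of the two-tetrahedron hypothesis is off. The $L(4,1)$ family in Remark~\ref{L41_triangs} consists of triangulations with arbitrarily many tetrahedra; they are excluded because every 2-3 move on them creates a degree one edge and no 3-2 move is available, which is a phenomenon specific to the no-degree-one-edge subgraph and unrelated to the Matveev--Piergallini statement you are discussing.
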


Our goal is to modify a path of triangulations given by the above theorem to avoid degree one edges.

\section{Anatomy of a degree one edge}

By definition, a degree one edge is formed from a single model edge $e$. The model tetrahedron that has $e$ as an edge has the two model faces incident to $e$ paired with each other. See Figure \ref{make_deg_1_edge}. Note that only one triangle of a triangulation containing a degree one edge is incident to that edge.

\begin{figure}[htb]
\centering
\labellist
\small\hair 2pt
\pinlabel $e$ at 251 39
\pinlabel $e'$ at 273.5 65
\endlabellist
\includegraphics[width=0.8\textwidth]{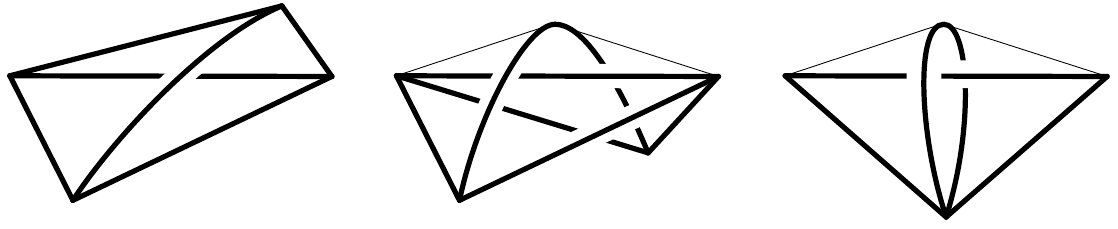}
\caption{A degree one edge is formed by gluing two faces of a tetrahedron to each other by ``closing the book'' around the edge. Edges of the tetrahedron are drawn with a thick line, while ``horizon'' lines of a triangular face that curves away from us are drawn with a thin line.} 
\label{make_deg_1_edge}
\end{figure}

In order to avoid introducing degree one edges when performing 2-3 and 3-2 moves, it will be useful to know under what conditions a degree one edge can arise. A 2-3 move alters the degrees of \emph{at most} nine edges of the triangulation: the nine edges of the two tetrahedra shown in the upper left of Figure \ref{2-3}. When a 2-3 move is performed, the degrees of the three ``equatorial'' edges go down by one, and the degrees of the other six edges go up by one. We say \emph{``at most} nine edges''  here, because there may be identifications among the edges, due to gluings not shown in Figure \ref{2-3}. Thus, in general, a 2-3 move can reduce the degree of an edge of the triangulation by up to three (if the three equatorial edges are identified with each other), while a 3-2 move can reduce the degree of an edge of the triangulation by up to six. However, any such large jumps down in degree cannot result in a degree one edge, as we will see in Corollary \ref{must_go_from_2_to_1}, which is derived from the following lemma.

\begin{lemma}\label{ways_to_create_deg_one}
There are two ways in which a degree one edge can be created, either via a 3-2 move or a 2-3 move. The two possibilities are shown in Figure \ref{deg2_to_deg1_edge}. In both cases, all of the tetrahedra shown are distinct (i.e. each tetrahedron shown corresponds to a distinct model tetrahedron). The triangulations before and after these moves may or may not have identifications amongst the boundary faces of the collections of tetrahedra shown.
\end{lemma}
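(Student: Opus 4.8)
The plan is to classify, by a direct local analysis, every way the degree of an edge can drop to one under a single move, using the fact recalled before the statement that a 2-3 move decreases exactly the three equatorial edges and increases the six remaining edges of the bipyramid, while a 3-2 move is its reverse. First I would observe that a created degree one edge $e$ is either a genuinely new edge of the move or an edge that already existed and whose degree dropped to one. In a 2-3 move the only new edge is the central edge around which the three new tetrahedra are arranged; this edge meets all three of them, so it has degree three and can never be the degree one edge. In a 3-2 move no new edge is created at all, since the two output tetrahedra form a bipyramid all of whose nine edges were already present, and the central edge is instead destroyed. Thus in either move $e$ must be a pre-existing edge whose degree strictly decreased, and the only edges of the bipyramid whose count within the move decreases are the three equatorial edges (for a 2-3 move) or the six slant edges joining an apex to the equator (for a 3-2 move).

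The crux, and the step I expect to require the most care, is ruling out a degree drop from something larger than two directly down to one, which is precisely the large jump caused by identifications among the edges of the move that is flagged before the statement. I would handle this with a counting argument that is insensitive to identifications. Let the \emph{local count} of $e$ be the number of model edges among the tetrahedra of the move that are identified to $e$; the global $\deg(e)$ is at least its local count, and identifications outside the move only increase it. After a 2-3 move each equatorial edge contributes one to the local count and each slant edge contributes two, so a post-move local count of one forces $e$ to be a single equatorial edge, identified with none of the other equatorial or slant edges of the bipyramid. Its pre-move local count is then exactly two, and since $\deg(e)=1$ forces $e$ to appear nowhere else in the triangulation, $e$ had degree exactly two beforehand. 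The mirror computation for a 3-2 move, in which equatorial edges now contribute two and slant edges one to the local count, shows that $e$ must be a single slant edge of pre-move degree exactly two. This is exactly the content of Corollary~\ref{must_go_from_2_to_1}: a degree one edge is always produced by sending a degree two edge to a degree one edge.

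Finally I would read off the two local pictures of Figure~\ref{deg2_to_deg1_edge}. In each case $\deg(e)=1$ means the book of tetrahedra around $e$ closes up, so the two faces incident to $e$, namely the two outer faces of the bipyramid meeting the relevant equatorial or slant edge, are glued to one another; this gluing lies on the boundary of the collection of tetrahedra shown and is untouched by the move, which is why it is present both before and after, while all remaining boundary faces may be glued arbitrarily. The distinctness of the tetrahedra shown is then immediate: by definition a 2-3 move is applied to two distinct tetrahedra and produces three freshly created ones, and a 3-2 move is applied to three distinct tetrahedra and produces two freshly created ones, so no two of the tetrahedra in either picture can coincide.
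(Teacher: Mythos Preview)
Your argument is correct. You argue \emph{forward}: fix a move, sum the model-edge contributions to any edge class $e$ inside the bipyramid (your ``local count''), and observe that $\deg(e)=1$ forces the local count to be exactly one, pinning $e$ down as a single equatorial edge (for a 2-3 move) or a single slant edge (for a 3-2 move) with pre-move degree exactly two. The paper instead argues \emph{backward}: it starts from the tetrahedron $T$ containing a degree one edge $e$, and enumerates which 2-3 or 3-2 moves could touch $T$ at all, ruling out edges and faces of $T$ one by one using the self-identifications visible in Figure~\ref{make_deg_1_edge}. Your counting argument handles all possible identifications among the edges of the bipyramid in one stroke, whereas the paper's approach works by restricting attention to a single tetrahedron and its four faces and six edges; both are short, but yours is more uniform and makes the derivation of Corollary~\ref{must_go_from_2_to_1} essentially automatic, while the paper's approach more directly hands you the local picture around $T$ (hence Figure~\ref{deg2_to_deg1_edge}).
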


\begin{figure}[htb]
\centering
\subfloat[The unique way in which a degree one edge can be formed via a 2-3 move.]{
\labellist
\small\hair 2pt
\pinlabel 2-3 at 110 51
\endlabellist
\includegraphics[width=0.45\textwidth]{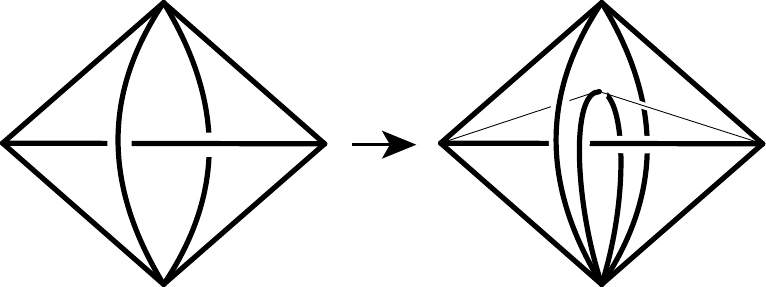}
\label{deg2_to_deg1_2-3}
}
\subfloat[The unique way in which a degree one edge can be formed via a 3-2 move.]{
\labellist
\small\hair 2pt
\pinlabel 3-2 at 110 51
\endlabellist
\includegraphics[width=0.45\textwidth]{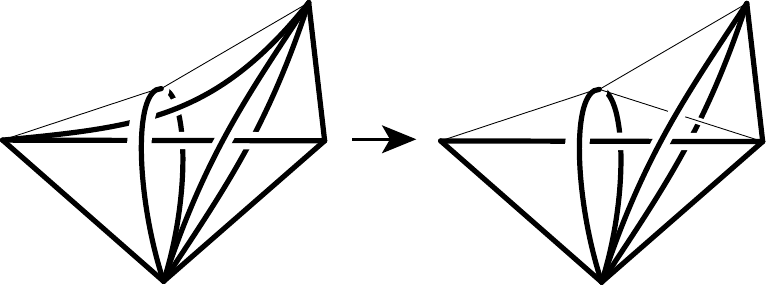}
\label{deg2_to_deg1_3-2}
}
\caption{} 
\label{deg2_to_deg1_edge}
\end{figure}

\begin{proof}
We consider the reverse move: starting with a tetrahedron $T$ incident to a degree one edge $e$, and considering which 2-3 or 3-2 moves can be applied to remove the degree one edge. 

First, consider the possible 3-2 moves that involve $T$ (and therefore can affect $e$). In Figure \ref{make_deg_1_edge}, $e$ is labelled, as is its opposite edge $e'$. A 3-2 move can only be applied to a degree three edge, all of whose incident tetrahedra are distinct (i.e. three distinct model tetrahedra have model edges identified to form the degree three edge). Therefore we cannot apply a 3-2 move to $e$, since its degree is incorrect, or to the other two edges incident to our tetrahedron, since the model tetrahedron has two of its model edges identified at each of these two edges. So the only possible edge that a 3-2 move can be applied to is $e'$. This move is shown in reverse in Figure \ref{deg2_to_deg1_2-3}. A 3-2 move results in two distinct tetrahedra, so we must start with two distinct tetrahedra when producing a degree one edge as in  Figure \ref{deg2_to_deg1_2-3}. 

Second, consider the possible 2-3 moves that involve $T$ (and therefore can affect $e$). Such a move must be performed on a triangle of the triangulation, whose two model triangles are on distinct model tetrahedra. Therefore we cannot use the one triangle incident to $e$, and must use one of the other two faces of $T$. Up to symmetry these give the same configuration. The reverse move is shown in Figure \ref{deg2_to_deg1_3-2}. The result of the 2-3 move consists of three distinct tetrahedra, so we must start with three distinct tetrahedra when producing a degree one edge as in Figure \ref{deg2_to_deg1_3-2}. 
\end{proof}

\begin{cor}\label{must_go_from_2_to_1}
A degree one edge comes from reducing the degree of a degree two edge by one. No single 2-3 or 3-2 move can convert any edge of degree higher than two into a degree one edge.
\end{cor}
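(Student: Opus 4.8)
The plan is to deduce the corollary directly from Lemma \ref{ways_to_create_deg_one}. That lemma has already shown that a degree one edge can be created in exactly two ways, displayed in Figure \ref{deg2_to_deg1_edge}, so the first step is simply to read off the degree of the relevant edge immediately before the move in each of the two pictures. In both configurations all of the tetrahedra shown are distinct, so the edge $e$ that is about to become degree one is formed from exactly two model edges beforehand; that is, it has degree two, and the move reduces it to degree one. This establishes the first assertion. The second assertion is then just its logical consequence: since every creation of a degree one edge arises in this way, no single move can send an edge of degree three or more to degree one.

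The only point that requires real argument---and the issue flagged in the introduction with its remark that a move can in principle lower an edge's degree ``by up to three'' or ``by up to six''---is to confirm that these larger drops never actually terminate at degree one. I would handle this with the standard bookkeeping for the 2-3 move: among its (at most) nine affected edges, the three equatorial edges each lose one from their degree while the other six each gain one, so any edge whose degree decreases must be equatorial. The apparent danger is that two or three of these equatorial edges are identified in the ambient triangulation, so that one edge of the triangulation loses two or three from its degree in a single move.

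The key observation that removes this danger is the following count. Suppose a 2-3 move identifies $k$ of the three equatorial edges to one edge $f$ of the triangulation. After the move these $k$ model edges lie in $k$ distinct new tetrahedra, so $f$ gains $k$ incidences from the new tetrahedra and therefore has degree at least $k$. Thus if $k \geq 2$ the resulting edge has degree at least two, and the only way to reach degree one is $k = 1$: a single, unidentified equatorial edge whose degree falls from two to one, exactly the situation in Figure \ref{deg2_to_deg1_edge}. The 3-2 move is then covered by applying the same count to its inverse. I expect this identification count to be the only genuine step; the remainder is a direct reading of the preceding lemma, so the main obstacle is conceptual rather than computational---recognising that a large drop in degree necessarily leaves the edge incident to several tetrahedra and so cannot reach degree one.
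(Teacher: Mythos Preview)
Your first paragraph is exactly the paper's proof: Lemma~\ref{ways_to_create_deg_one} already classifies \emph{all} ways a degree one edge can arise, and in both listed configurations the edge has degree two beforehand; the paper states this in a single sentence and stops.

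Your second and third paragraphs are correct but unnecessary. Because the lemma's classification is exhaustive (its proof works backwards from an arbitrary degree one edge and finds every move that could have produced it), there is nothing left to rule out: the ``large drops'' you worry about are already excluded by the lemma, not merely by your subsequent counting. Your counting argument does give an independent, self-contained verification that avoids appealing to the figure, which is a mild advantage; but as a proof of the corollary it duplicates work already done. One small imprecision: ``applying the same count to its inverse'' for the 3-2 case is a bit glib---the roles of the equatorial and outer edges swap, so you should say explicitly that if $k$ of the six outer model edges coincide in the triangulation, then after the 3-2 move they still contribute $k$ model edges in the two resulting tetrahedra, forcing degree at least $k$.
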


\begin{proof}
Lemma \ref{ways_to_create_deg_one} lists the two ways to produce a degree one edge, and in both cases the edge in question has degree two beforehand.
\end{proof}

\begin{rmk}
It is possible that a 3-2 move can make two degree one edges at once. A 2-3 move can also make more than one degree one edge, but only in a triangulation with multiple material vertices, or both ideal and material vertices.
\end{rmk}

\section{Proof of the main theorem}

In this section, we prove Theorem \ref{main theorem}. We first describe the general strategy of the proof, with details to come in the subsequent pages.

We are given two triangulations $\cT$ and $\cT'$ of a manifold $M$, neither of which has a degree one edge, and we must find a path of 2-3 and 3-2 moves which connects from one to the other, and which does not pass through a triangulation with a degree one edge. Theorem \ref{connectivity} gives us the existence of a path $\gamma = (\cT = \cT_1, \ldots, \cT_N = \cT')$ connected by 2-3 and 3-2 moves, although some of the intermediate triangulations $\cT_i, 1<i<N$ may have degree one edges. Our plan is to modify the path to detour around any such triangulations.

First, observe that the tetrahedron $T$ incident to a degree one edge $e$ is unchanged for the entire lifetime of the degree one edge. Any 2-3 or 3-2 move that alters $T$ is one of the moves shown in Figure \ref{deg2_to_deg1_edge}, as we saw in the proof of Lemma \ref{ways_to_create_deg_one}. This means in particular that the triangle $\tri$ incident to $e$ is also unchanged for the lifetime of the degree one edge. 

Suppose at first that there are no degree one edges in the path $\gamma$ from $\cT_1$ to $\cT_i$. Suppose that a single degree one edge $e$ is introduced in the triangulation $\cT_{i+1}$, persists until $\cT_{j-1}$, and then is removed in $\cT_j$. Let the single triangle incident to $e$ be called $\tri$. We take a detour from $\cT_i$, performing a sequence of moves (which do not go through any triangulations with degree one edges) that results in a triangulation $\overline{\cT_i}$. This triangulation is identical to $\cT_i$, other than that the triangle $\tri$ has been unglued, and a triangulation of a three-ball with boundary consisting of two triangles is glued onto the two revealed faces of the triangulation. Said another way, we unglue the triangle $\tri$, and insert a \emph{triangular pillow}. Our triangular pillow must not have any degree one edges inside of it. The act of gluing it in where $\tri$ was increases the degree of the three edges incident to $\tri$.

We now continue the path $\gamma$, with $\overline{\cT_i}$ in place of $\cT_i$. The next move no longer produces a degree one edge, since the degree of $e$ has been increased by the insertion of the triangular pillow. As we continue this parallel path to the original $\gamma$, no moves alter the triangular pillow, by our previous observation about the fact that $\tri$ is unchanged for the lifetime of the degree one edge.

We continue, up until we get to the triangulation $\overline{\cT_j}$, which is the same as $\cT_j$ with $\tri$ unglued and the triangular pillow inserted into the resulting hole. After this move, we remove the triangular pillow by another sequence of moves, that converts $\overline{\cT_j}$ back into $\cT_j$ (this is precisely the reverse process to that of inserting the triangular pillow). Having completed our detour, we continue with the path $\gamma$, until we reach $\cT_N = \cT'$. The resulting path then has no triangulations with degree one edges.

If we can perform the above detour, then we can similarly deal with paths with multiple degree one edges, even multiple produced on the same move. All we need to do is apply the move of inserting a triangular pillow for each degree one edge in turn, leaving it there for the lifetime of the degree one edge, then remove it immediately afterwards.

Given this strategy, all we need to do is describe our triangular pillow, and show how it can be inserted into the triangle incident to an edge $e$ just before the degree of $e$ is to be reduced to one, all without introducing any degree one edges in the process.

\subsection{The triangular pillow}
\label{triangular_pillow}

First, we describe the triangulation we use for our triangular pillow. Figure \ref{make_bird_beak} shows a \emph{bird beak}, which is two tetrahedra arranged around a degree two edge. On the right we draw the bird beak in a suggestive manner -- rotating the upper half down so that it looks more like a real-life bird's beak. In Figure \ref{combine_two_beaks}, we glue together two bird beaks, interleaving the mandibles of each beak. The resulting triangulation of the three-ball has two triangular faces on its boundary, and has no degree one edges in its interior. Ungluing a triangle $\tri$ of a triangulation and inserting this triangular pillow adds three, three, and eight to the degrees of the edges incident to $\tri$. 

\begin{figure}[htb]
\centering
\includegraphics[width=0.5\textwidth]{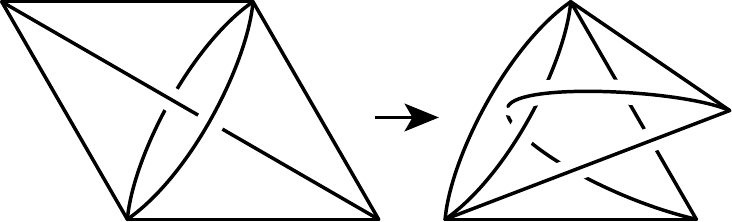}
\caption{A bird beak, is two tetrahedra arranged around a degree two edge.} 
\label{make_bird_beak}
\end{figure}

\begin{figure}[htb]
\centering
\labellist
\small\hair 2pt
\pinlabel $3$ at 200 36
\pinlabel $3$ at 238 36
\pinlabel $8$ at 219 9
\endlabellist
\includegraphics[width=0.65\textwidth]{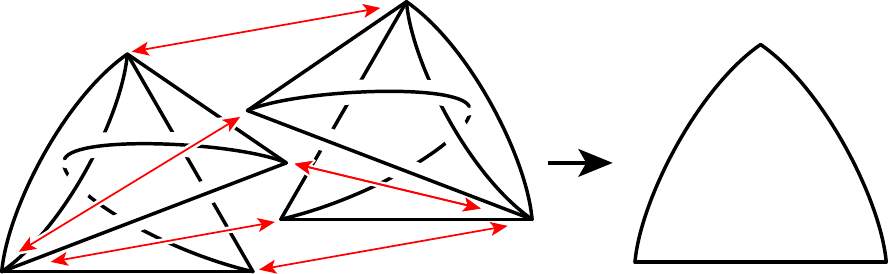}
\caption{Our triangulated triangular pillow is formed from two bird beaks, interleaved with each other. On the right, the contributions to the degrees of the three edges incident to the triangular pillow are shown. The four edges internal to the triangular pillow have degrees two, two, three and three.} 
\label{combine_two_beaks}
\end{figure}

\subsection{The V-move}

We must insert our triangular pillow using 2-3 and 3-2 moves. As a first step, we introduce a bird beak. We use Matveev's \emph{V-move}~\cite[Figure 1.13]{matveev}. The effect of the V-move is shown in Figure \ref{triangulation_V-move}: it wraps a bird beak around two faces of a tetrahedron. Note that there is a symmetry in the resulting three tetrahedra: we can also see this as wrapping a bird beak around the opposite two faces of the same tetrahedron. There are three different possible V-moves to apply in a tetrahedron, because there are three pairs of opposite edges.

\begin{figure}[htb]
\centering
\includegraphics[width=0.5\textwidth]{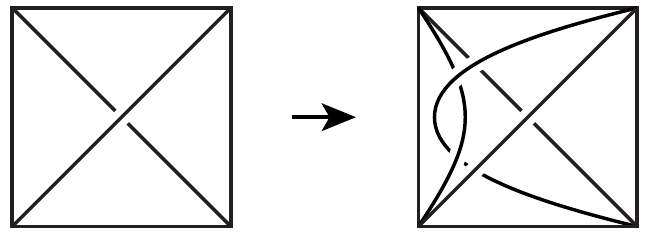}
\caption{The V-move wraps a bird beak around two faces of a tetrahedron. } 
\label{triangulation_V-move}
\end{figure}

\begin{figure}[htb]
\centering
\subfloat[Before the V-move.]{
\includegraphics[width=0.35\textwidth]{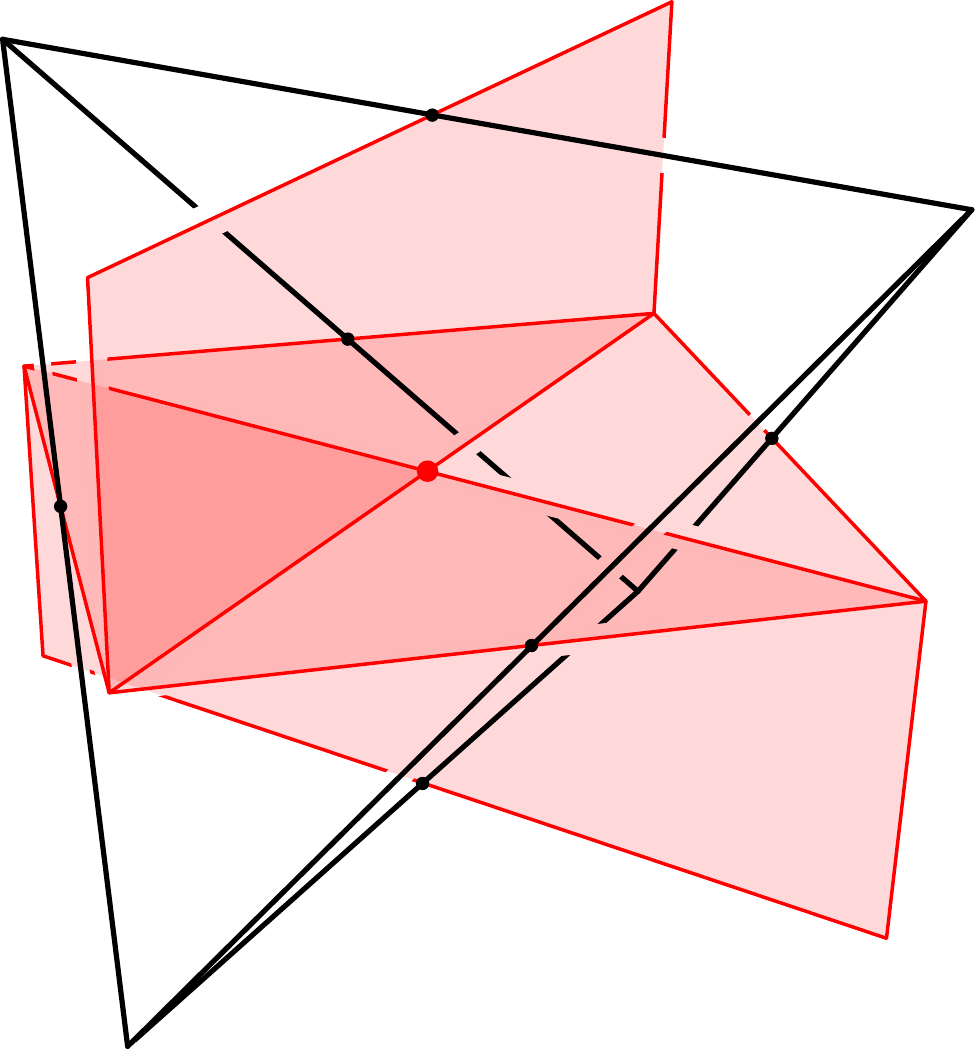}
}
\qquad
\subfloat[After the V-move.]{
\includegraphics[width=0.35\textwidth]{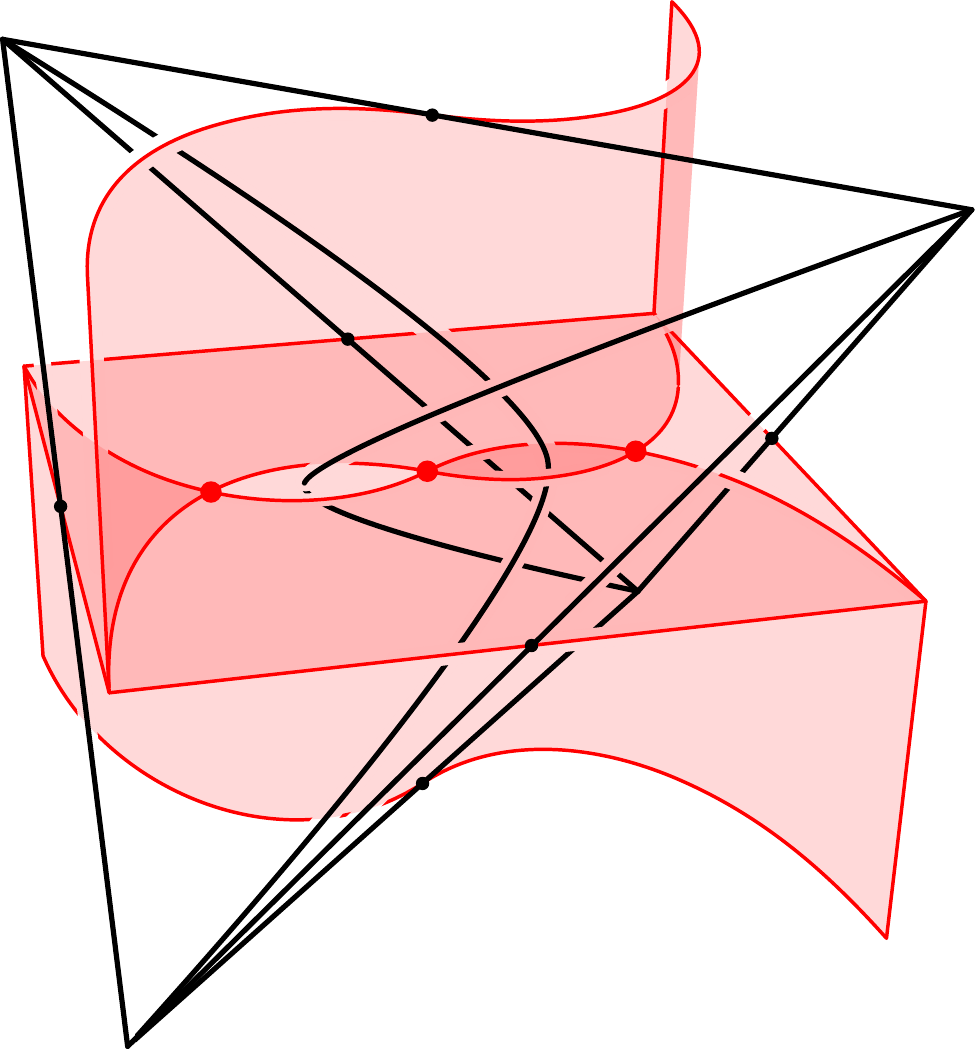}
}
\caption{The V-move, with both the triangulation and dual special spine shown.} 
\label{triangulation_and_spine_V-move}
\end{figure}

The V-move can be implemented using 2-3 and 3-2 moves, assuming that the triangulation has more than one tetrahedron. The process is easier to understand in the dual setting of special spines. Figure \ref{triangulation_and_spine_V-move} shows the V-move again, with both the triangulation and the special spine shown. Vertices, edges and faces of the dual spine correspond to tetrahedra, faces and edges of the triangulation.

\begin{lemma}
Let $\tri$ be a face of a triangulation which has no degree one edges. Suppose that $\tri$ is incident to two distinct tetrahedra $T_1, T_2$, and suppose that the edges of $\tri$ (which may not be distinct) all have degree at least three. Then we can perform any of the three V-moves in either $T_1$ or $T_2$ by sequences of 2-3 and 3-2 moves, none of which introduce a degree one edge at any point.
\label{V-move_lemma}
\end{lemma}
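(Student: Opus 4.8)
The plan is to realize the V-move as an explicit sequence of 2-3 and 3-2 moves, and then check degree conditions at each step so that no degree one edge ever appears. As the excerpt suggests, the cleanest way to write down the sequence is in the dual language of special spines, where the V-move has a standard local decomposition (Matveev); I would transcribe that decomposition back into the triangulation picture, obtaining a short fixed sequence of moves $\cT \to \cT' \to \cdots \to \cT_{\mathrm{fin}}$ whose net effect wraps a bird beak around two faces of $T_1$ (or $T_2$). The key point is that this sequence is local: every move in it is supported on the tetrahedra that are created from, or adjacent to, $T_1$ and $\tri$, so the only edges whose degrees change are the edges of $T_1$, the edges of $\tri$, and a few newly-created interior edges.

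First I would record, for the standard decomposition, exactly which intermediate moves are performed and which edges each one touches. The relevant moves are 2-3 and 3-2 moves, and by Corollary \ref{must_go_from_2_to_1} the only danger is a move that takes some edge of degree exactly two down to degree one; equivalently, the only way to create a degree one edge is one of the two configurations in Figure \ref{deg2_to_deg1_edge}. So the verification reduces to checking, move by move, that no 3-2 move in the sequence is applied to a degree three edge whose ``opposite'' edge $e'$ is the edge of a single tetrahedron in the configuration of Figure \ref{deg2_to_deg1_2-3}, and that no 2-3 move is applied in the configuration of Figure \ref{deg2_to_deg1_3-2}. This is where the hypotheses are used: the assumption that $\tri$ is incident to two \emph{distinct} tetrahedra guarantees that the initial 2-3 move on $\tri$ is legal and does not fall into a forbidden configuration, and the assumption that every edge of $\tri$ has degree at least three provides the needed slack, since the moves in the sequence lower each such edge's degree by a bounded amount (and by Corollary \ref{must_go_from_2_to_1} a single move cannot drop an edge of degree $\ge 3$ to degree one anyway). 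The hypothesis that the triangulation already has no degree one edges ensures the starting configuration is clean, and the standing assumption that the triangulation has more than one tetrahedron makes the very first 2-3 move available.

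The main obstacle I expect is not the existence of the move sequence, which is essentially Matveev's, but the bookkeeping of edge identifications under the face pairings that are \emph{not} drawn in the local pictures. Because of such hidden identifications, an edge that looks like it has large degree in the local picture might coincide with another edge, and a single move might lower its degree by more than one (the excerpt notes a 3-2 move can drop a degree by up to six). I would handle this by arguing at the level of the degree arithmetic of Corollary \ref{must_go_from_2_to_1}: regardless of identifications, a degree one edge can only be produced from a degree two edge, so it suffices to show that no edge in the configuration ever sits at degree two immediately before a move that would decrement it. Given the degree-$\ge 3$ hypothesis on the edges of $\tri$ and the fact that the interior edges created along the way have controlled degrees, one checks that the only edges ever at degree two are the newly-created bird-beak edges, and these are never the target of a degenerating move in the sequence. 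Assembling these local checks completes the argument for all three V-moves in either $T_1$ or $T_2$, the three choices being handled symmetrically by the three pairs of opposite edges of the tetrahedron.
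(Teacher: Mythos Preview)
Your plan is correct and matches the paper's approach: realize the V-move via Matveev's standard decomposition (three 2-3 moves followed by a 3-2 move, read off in the dual spine picture), then use Corollary~\ref{must_go_from_2_to_1} to reduce the identification bookkeeping to checking that no edge is at degree two just before a decrementing move. The paper's actual proof is even shorter than you anticipate: the single inspection fact needed is that throughout the sequence only the three faces corresponding to the edges of $\tri$ ever drop below their starting degree, and each drops by exactly one (at the very first 2-3 move); all newly created faces stay at degree at least two. With that, the degree-$\ge 3$ hypothesis on the edges of $\tri$ plus Corollary~\ref{must_go_from_2_to_1} finishes immediately, and no separate case analysis of the forbidden configurations in Figure~\ref{deg2_to_deg1_edge} is required.
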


\begin{proof}
Figure \ref{V-move}, adapted from \cite[Figure 1.15]{matveev} shows the process of implementing the V-move with 2-3 and 3-2 moves, using the dual spine to the triangulation. By inspection, we can check that only the three faces marked with a $*$ have their degree reduced below their starting value during the process, and only by one, during the first 2-3 move. (In the dual picture, these three faces correspond to the three edges of the triangle $\tri$.) Of the new faces added, the smallest degree we see is two. Moreover, by Corollary \ref{must_go_from_2_to_1}, even if there were identifications between the faces marked $*$, the only way that such a face could become degree one is if it were degree two at the start. By assumption, this is not the case. 
\end{proof}

\begin{figure}[htb]
\centering
\labellist
\small\hair 2pt
\pinlabel $*$ at 65 205
\pinlabel $*$ at 56 187
\pinlabel $*$ at 63 166
\pinlabel 2-3 at 345 201
\pinlabel 2-3 at 530 190
\pinlabel 2-3 at 530 64
\pinlabel 3-2 at 345 50
\endlabellist
\includegraphics[width=0.9\textwidth]{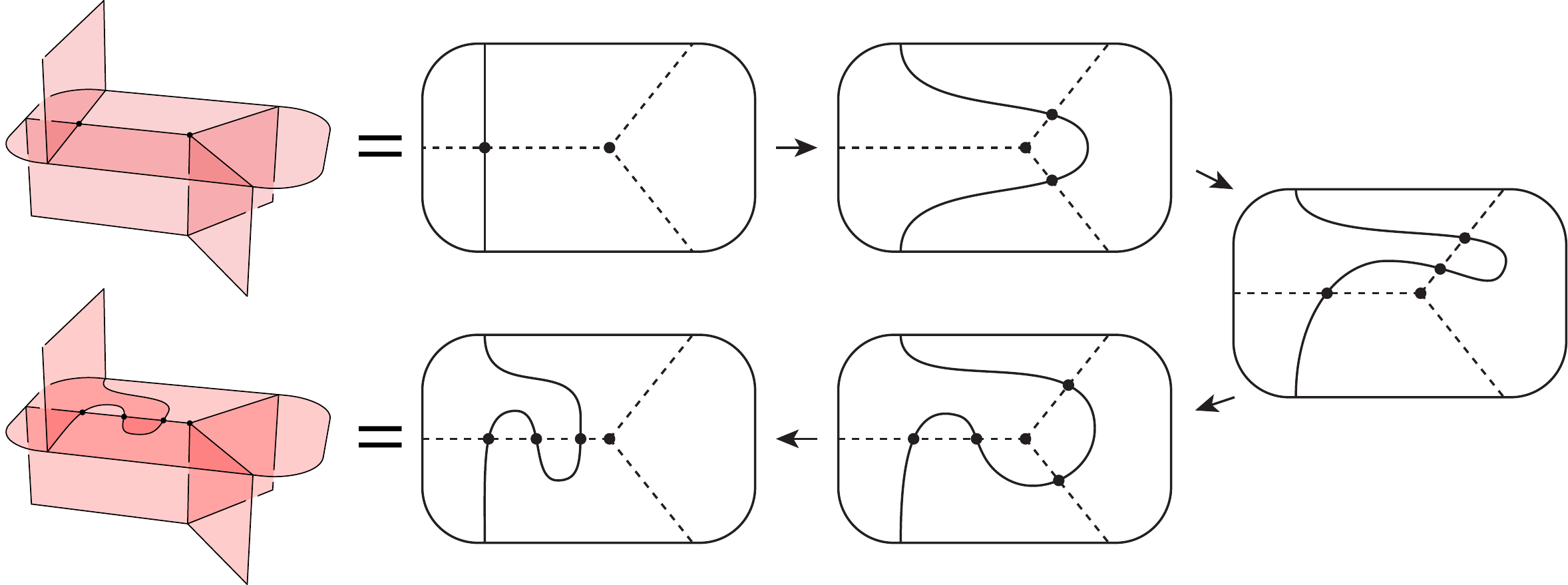}
\caption{The V-move is a composition of 2-3 and 3-2 moves.} 
\label{V-move}
\end{figure}

\subsection{Rotating the mandibles of a bird beak}

With the V-move, we introduce a bird beak, wrapped around two faces and across their common edge $e$ of a tetrahedron. We can think of the introduction of the bird beak as a 0-2 move (as in Definition \ref{0-2_move}), and so it splits the book of tetrahedra around $e$ into two half-books, one of which contains only the one tetrahedron we wrapped the bird beak around.

We will also need to be able to rotate the mandibles of the bird beak around in the split book of tetrahedra, effectively moving tetrahedra from one half-book to the other. This will let us close the mandibles on each other and make our triangular pillow. The move shown in Figure \ref{move_beak} achieves this goal, allowing us to move one mandible of the beak from one side of a tetrahedron to the other, assuming that this third tetrahedron is distinct from the two tetrahedra of the bird beak. The move is shown in the dual picture in Figure \ref{move_beak_spine}, which is adapted from \cite[Fig. 1.18]{matveev}. 

\begin{figure}[htb]
\subfloat[The move can be performed by applying a 2-3 move followed by a 3-2 move.]{
\centering
\labellist
\small\hair 2pt
\pinlabel 2-3 at 297 110
\pinlabel 3-2 at 631 110
\endlabellist
\includegraphics[width=0.9\textwidth]{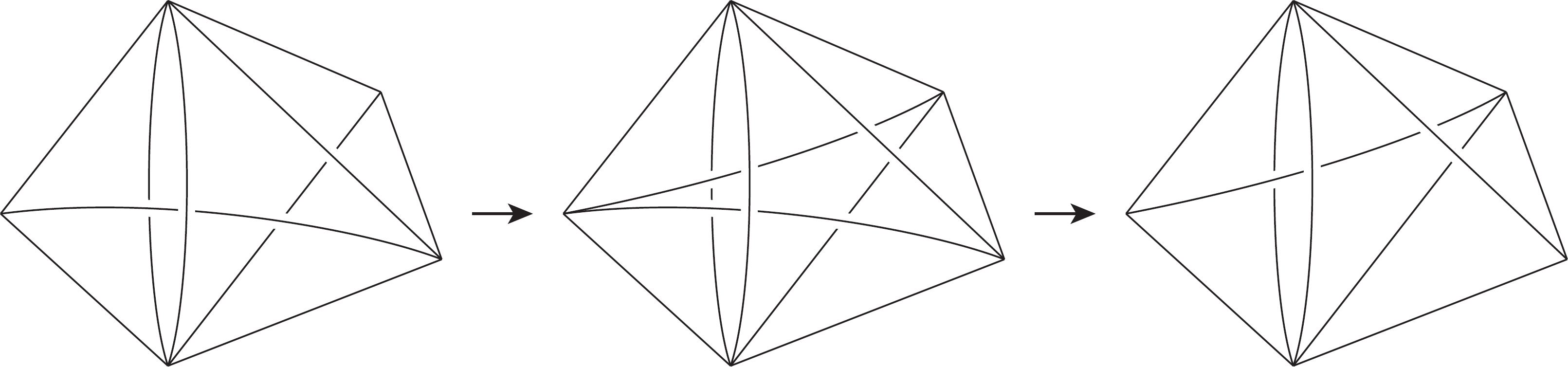}
}

\subfloat[The move shown in cross-section view. The lower mandible of the beak rotates to the right within the book of tetrahedra, moving the tetrahedron $T$ from the right half-book to the left half-book.]{
\centering
\labellist
\small\hair 2pt
\pinlabel $T$ at 153 55
\pinlabel $T$ at 440 50
\endlabellist
\includegraphics[width=0.5\textwidth]{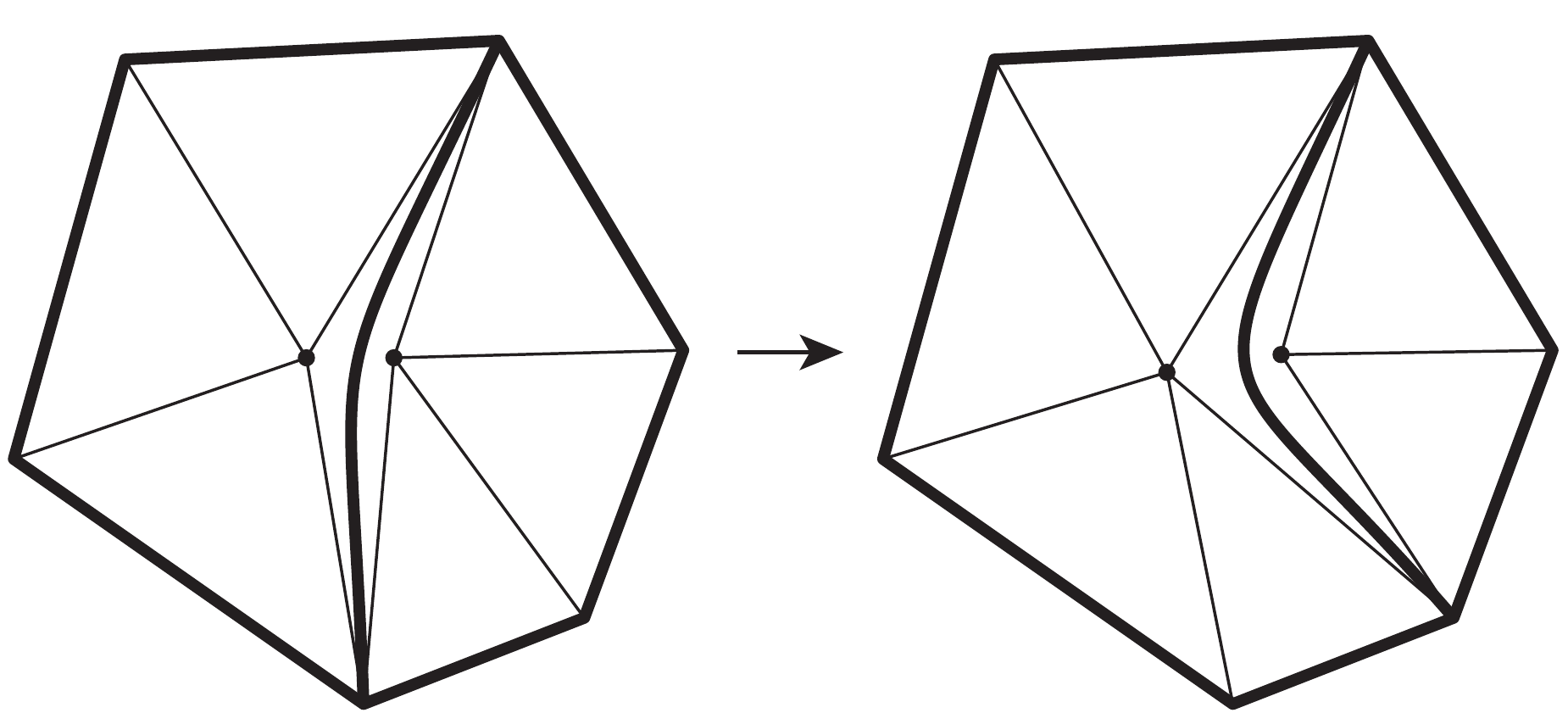}
}
\caption{Rotating a mandible of a bird beak past a tetrahedron.} 
\label{move_beak}
\end{figure}

\begin{rmk}
The move shown in Figure \ref{move_beak} does not allow us to rotate the two mandibles of a single bird beak \emph{past each other}. Here the configuration to consider involves a model triangle $\tri$, two of whose edges are identified to form a single edge $e$ of the triangulation. If we perform a 0-2 move, inserting a bird beak and splitting apart the book of tetrahedra around $e$, it can happen that the two mandibles of the beak are glued to each other across $\tri$, and we may want to swap their order. The move shown in \cite[Fig. 1.19]{matveev} allows us to rotate mandibles past each other in such a configuration, but we will not require such a move in this paper.
\label{move_beak_past_self}
\end{rmk}

Note that we do not want to rotate one mandible to close onto the other mandible of a single bird beak, as this would result in a degree one edge where the beak is folded over onto itself.

\begin{lemma}
Assume that we have a triangulation with a bird beak, and that collapsing the bird beak via a 2-0 move would not result in a triangulation with a degree one edge. Then
we can perform the move of rotating the bird beak past a tetrahedron that is not one of the two tetrahedra of the bird beak, by performing a 2-3 followed by a 3-2 move, neither of which introduce a degree one edge, unless this move results in a bird beak folded over onto itself.
\label{rotate_bird_beak}
\end{lemma}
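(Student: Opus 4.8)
The plan is to follow the template of the proof of Lemma \ref{V-move_lemma}. The rotation is realised as a 2-3 move followed by a 3-2 move (Figure \ref{move_beak}), so for each of these two moves I would invoke Corollary \ref{must_go_from_2_to_1}: a degree one edge can only be produced by reducing the degree of a degree two edge by exactly one. It therefore suffices to list the edges whose degree strictly decreases under each move and to check that none of them has degree two immediately beforehand, except in a configuration ruled out by the hypotheses. Throughout I work, as in the rest of the construction, in the setting where the triangulation currently has no degree one edge, so every edge not internal to the pictured configuration already has degree at least two; by Corollary \ref{must_go_from_2_to_1} this also means that any identifications among the boundary faces of the configuration create no additional danger beyond the degree two edges we explicitly control.

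First I would treat the 2-3 move. It is performed on the face shared by $T$ and the beak tetrahedron whose mandible is being rotated; this face pairing is between distinct model tetrahedra precisely because $T$ is not one of the two tetrahedra of the beak, so the move is legal. By inspection of Figure \ref{move_beak}, the three equatorial edges whose degree drops by one are the two \emph{side} edges of the beak running from the rotating mandible into the book, together with the edge $f$ of the beak opposite the hinge on the side of $T$. The two side edges are each incident to both tetrahedra of the beak and to $T$, hence have degree at least three and cannot reach degree one. The edge $f$ has degree two exactly when the half-book on the side of $T$ consists of $T$ alone, and in that situation pushing $T$ across closes the two mandibles onto one another; this is precisely the excluded case of the beak folding over onto itself. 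Away from that case, the 2-3 move introduces no degree one edge.

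Next I would treat the 3-2 move. After the 2-3 move the hinge edge of the beak has become a degree three edge incident to three distinct tetrahedra, and the 3-2 move is performed there; the net effect of the rotation is to drop the pivot vertex $v$ (the beak vertex not carried along) from the beak and to replace it by the apex of $T$. Of the six edges whose degree decreases, four are safe: the temporary edge created by the 2-3 move has degree three, while the edge from $v$ to the apex of $T$ and the two side edges on the carried side of the beak all had their degree \emph{raised} by the preceding 2-3 move and so enter the 3-2 move with degree at least three. The two remaining edges join $v$ to the rest of the beak, and they lose \emph{both} tetrahedra of the beak under the rotation --- which is exactly the effect a 2-0 collapse of the beak would have on them. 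This is where the hypothesis is used directly: since collapsing the beak by a 2-0 move produces no degree one edge, each of these two edges has degree at least four before the rotation, hence degree at least two afterwards, and in particular is never reduced from degree two to degree one during the 3-2 move.

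The main obstacle is the bookkeeping of the last paragraph: recognising that the only genuinely dangerous edges of the 3-2 move are the two attaching the dropped vertex $v$ to the remainder of the beak, and that the rotation affects these edges in exactly the same way as a collapse of the beak, so that the 2-0 hypothesis is precisely the condition needed. Matching the single remaining at-risk edge $f$ with the folding exclusion, and correctly separating the merely temporary edges from the permanent ones, are the other points that require care; all three follow by inspection from Figure \ref{move_beak} once the combinatorics of the two constituent moves have been pinned down.
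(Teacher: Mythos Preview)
Your approach is essentially the same as the paper's: both amount to checking, for each of the two constituent moves, that no edge (dually, no 2-stratum of the spine) drops to degree one, with the at-risk cases handled either by the folding exclusion or by the 2-0 hypothesis. The paper's proof is considerably terser: it works in the dual spine picture, refers the reader to Figure~\ref{move_beak_spine}, singles out the faces marked $*$ as those whose degree-one failure corresponds exactly to the beak folding onto itself, and disposes of all remaining faces in one sentence (``either by directly counting vertices in the diagram, or because the corresponding face after performing the 2-0 move has degree at least two, by assumption''). Your version carries out the same case analysis in the triangulation picture with explicit edge-by-edge bookkeeping; in particular, your identification of the 3-2 move as taking place on the hinge edge (which the 2-3 move has just raised to degree three), and of the two genuinely dangerous edges $pr, ps$ as those joining the dropped beak vertex to the book-edge endpoints, matches what the figure shows and makes transparent exactly how the 2-0 hypothesis enters. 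What the spine picture buys is brevity and a direct visual check; what your account buys is an explicit audit trail that does not depend on reading degrees off a diagram.
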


\begin{proof}
See Figure \ref{move_beak_spine}. The faces marked $*$ have degree at least two, since otherwise we have a bird beak folded over onto itself. All other faces in the sequence above have degree at least two, either by directly counting vertices in the diagram, or because the corresponding face after performing the 2-0 move has degree at least two, by assumption.
\end{proof}

\begin{figure}[htb]
\centering
\labellist
\small\hair 2pt
\pinlabel 2-3 at 341 458
\pinlabel 3-2 at 723 458

\pinlabel 2-0 at 356 265
\pinlabel 2-0 at 710 265

\pinlabel $*$ at 41 432
\pinlabel $*$ at 908 341

\endlabellist
\includegraphics[width=0.9\textwidth]{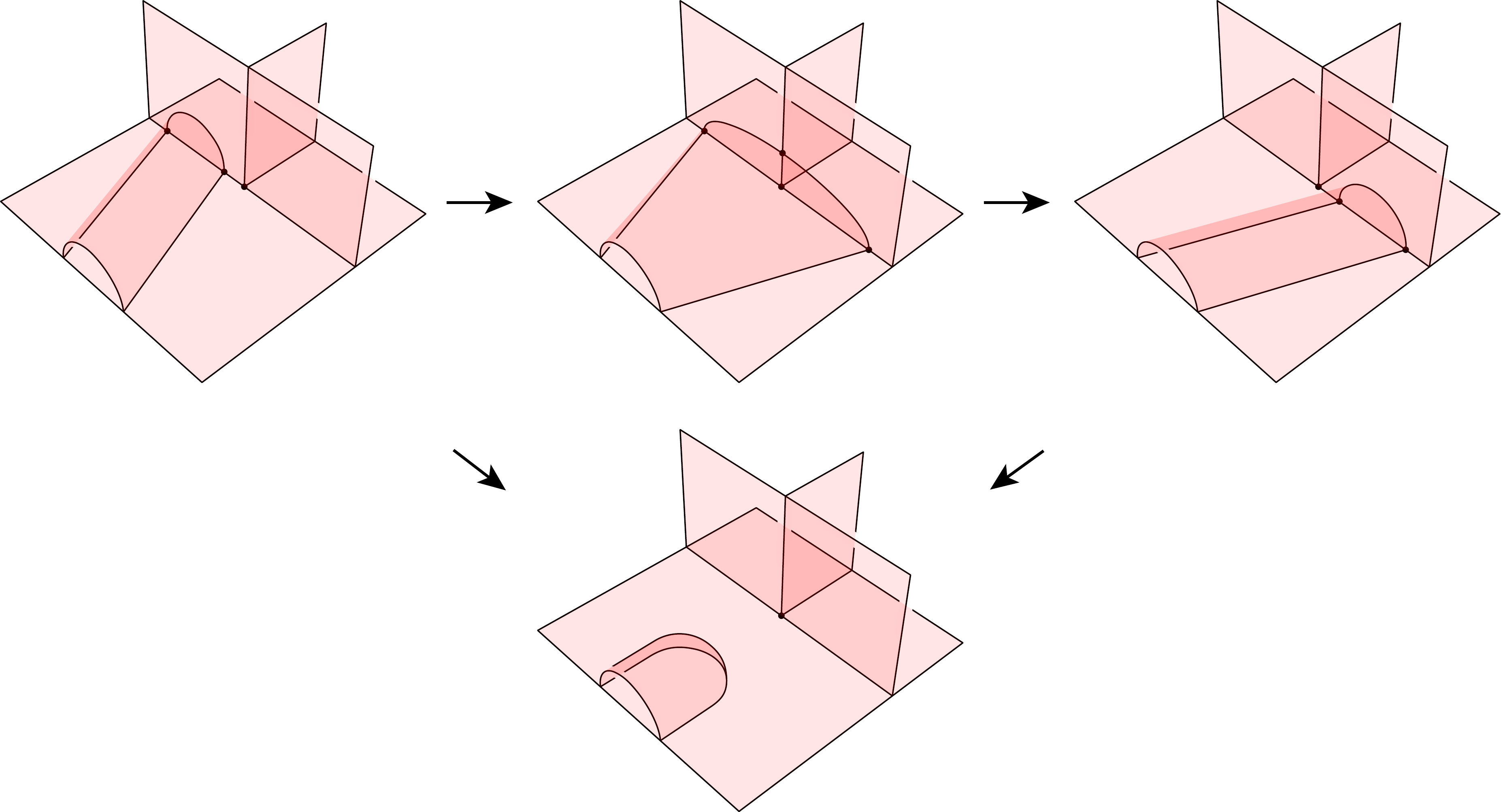}
\caption{The dual picture of rotating a bird beak past a tetrahedron.}
\label{move_beak_spine}
\end{figure}

If we start with a triangulation with no degree one edges and then perform a V-move, this lemma allows us to rotate the mandibles of the resulting bird beak to any position we wish without introducing any degree one edges, as long as we move the mandibles of the beak through a part of the book of tetrahedra for which all tetrahedra are distinct (to avoid the issues discussed in Remark \ref{move_beak_past_self}), and we do not close the beak onto itself.

\subsection{Inserting a triangular pillow}

We have our triangulation $\cT_i$, which has a triangle $\tri$, with incident edge $e$ which is degree two, and which in $\cT_{i+1}$ becomes degree one. Let the other two edges of $\tri$ be called $e'$ and $e''$, and suppose that the two tetrahedra incident to $e$ are also glued along the edges $\bar{e}'$ and $\bar{e}''$. See Figure \ref{near_tri_and_degree_2}.

\begin{figure}[htb]
\centering
\labellist
\small\hair 2pt
\pinlabel $\tri$ at 41 74
\pinlabel $e$ [t] at 45 58
\pinlabel $e'$ [bl] at 95 87
\pinlabel $\bar{e}'$ [tl] at 95 28
\pinlabel $f$ [l] at 69 34
\pinlabel $e''$ [br] at 35 87
\pinlabel $\bar{e}''$ [tr] at 35 28
\pinlabel $v$ [l] at 127 58
\endlabellist
\includegraphics[width=0.3\textwidth]{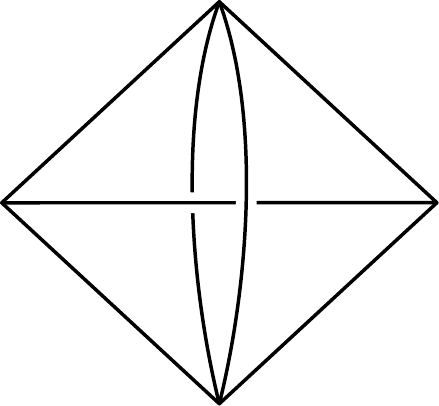}
\caption{The vicinity of the triangle $\tri$, with edge $e$ of degree two.} 
\label{near_tri_and_degree_2}
\end{figure}

Our plan is to use two V-moves, on tetrahedra incident to each of $e'$ and $e''$, to introduce bird beaks that split the books of tetrahedra around $e'$ and $e''$. We then use Lemma \ref{rotate_bird_beak} to rotate the mandibles of the two beaks around until they form our triangular pillow.
In order to apply Lemma \ref{V-move_lemma} and so apply the V-move, splitting the book of tetrahedra around $e'$ say,  we need to find a triangle incident to $e'$, all of whose edges are degree at least three. In particular, we require that $e'$ has degree at least three -- we first check that this is the case.

Suppose not, and assume that $e'$ has degree two. Then considering Figure \ref{near_tri_and_degree_2}, we see that the triangulation would have a vertex $v$ with spherical link, with only the two tetrahedra incident to $e$ incident to it. But there must then be another vertex of the triangulation, since not all vertices of the two tetrahedra are at $v$. In the closed case this contradicts the assumption in Theorem \ref{main theorem} that the triangulation has only a single vertex. In the case that $M$ has boundary, the spherical link of $v$ gives another contradiction, since we assume that $M$ has no spherical boundary components.

So we know that $e'$ has degree at least three, as do $e'', \bar{e}',$ and $\bar{e}''$ by the same argument. Next, we need to find a triangle incident to $e'$, all of whose edges have degree at least three. Consider Figure \ref{near_tri_and_degree_2} again. We know that the edge $\bar{e}'$ has degree at least three. If the edge $f$ also has degree at least three then we are done: we have found a triangle incident to $e'$ with three edges of degree at least three. If not, then $f$ must have degree two (since there are no degree one edges), and $f$ is incident to another tetrahedron, $T$, say, which is incident to $e', \bar{e}', e'', \bar{e}''$ and one other edge, say $f_1$, which is opposite $f$ in $T$. The tetrahedron $T$ has two triangles that are incident to $e'$, one of which we have already checked, and the other, which has edges $e', e'',$ and $f_1$. We saw above that $\bar{e}''$ has degree at least three, and so if $f_1$ also has degree three, then we are again done. If not, then again $f_1$ must have degree two, and we continue with another tetrahedron incident to $e', \bar{e}', e'', \bar{e}''$ and one other edge, say $f_2$, which is opposite $f_1$ in the new tetrahedron. 

We continue in this fashion, building a stack of tetrahedra, until we either find a triangle incident to $e'$ and two other edges of degree at least three, or the stack wraps around to join onto the back side of the two tetrahedra we started with, and with the final degree two edge being the edge opposite $e$ in the back tetrahedron shown in Figure \ref{near_tri_and_degree_2}. Note that this is the only way in which the stack of tetrahedra can glue to itself. 

Since the manifold is orientable, there are four cases to consider, depending on the angle by which the front of the stack of tetrahedra we have built is glued onto the back. 
\begin{itemize}
\item If the stack is glued with no rotation (and so there must be an even number of tetrahedra in the stack), then the four vertices of the tetrahedra are distinct after gluing with spherical vertex links, which again contradicts the hypotheses given in Theorem \ref{main theorem}.
\item If the stack is glued with a half-turn rotation (and so again there must be an even number of tetrahedra in the stack), then there are two distinct vertices after gluing, each with spherical vertex links, and again we contradict the hypotheses given in Theorem \ref{main theorem}.
\item If the stack is glued with either a quarter turn or a three quarters turn, then we have a single-vertex triangulation of the manifold L(4,1), which is the exception listed in Theorem \ref{main theorem}.
\end{itemize}

\begin{rmk}
The above argument constructs the triangulations of L(4,1) mentioned in the introduction. Each is formed from a stack of an odd number of tetrahedra, with degree two edges between each pair of tetrahedra in the stack. The two triangles at the top of the stack are glued to the two triangles at the bottom of the stack, with a quarter (or equivalently, three-quarter) turn. No edge has degree three, so no 3-2 move is possible. Moreover, every possible 2-3 move introduces a degree one edge.
\label{L41_triangs}
\end{rmk}

So, we find a triangle incident to $e'$ for which all edges have degree at least three. Then using Lemma \ref{V-move_lemma}, we perform a V-move that results in a bird beak that splits apart the book of tetrahedra around $e'$. Figure \ref{first_V-move} shows the result in the case that the edge $f$ has degree at least three -- we get a bird beak wrapped around one of the two tetrahedra incident to $e$. Otherwise, we obtain a bird beak wrapped around a tetrahedron somewhere further up the stack. By using Lemma \ref{rotate_bird_beak}, we can rotate the mandibles of such a beak one by one down the stack (noting that all tetrahedra of the stack are distinct), until it is in the position shown in Figure \ref{first_V-move}. By moving the lower mandible before the upper one, we can ensure that we do not close the bird beak onto itself when rotating its mandibles, which would produce a degree one edge.
\begin{figure}[htb]
\centering
\labellist
\small\hair 2pt
\pinlabel $e$ [t] at 45 58
\pinlabel $\bar{e}'$ [tl] at 95 28
\pinlabel $f$ [l] at 69 34
\pinlabel $e''$ [br] at 35 87
\pinlabel $\bar{e}''$ [tr] at 35 28
\endlabellist
\subfloat[After performing the V-move, assuming that $f$ has degree at least three.]{
\includegraphics[width=0.4\textwidth]{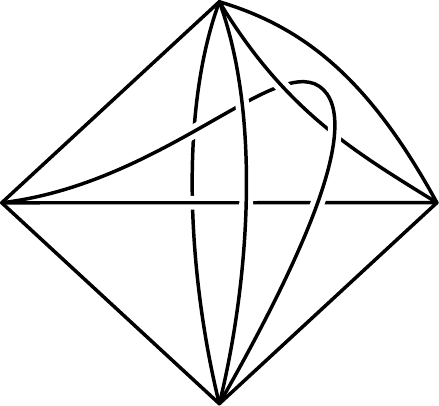}
\label{first_V-move}
}
\qquad
\subfloat[After performing the second V-move.]{
\labellist
\small\hair 2pt
\pinlabel $e$ [t] at 45 58
\pinlabel $\bar{e}'$ [tl] at 95 28
\pinlabel $f$ [l] at 69 34
\pinlabel $\bar{e}''$ [tr] at 35 28
\endlabellist
\includegraphics[width=0.4\textwidth]{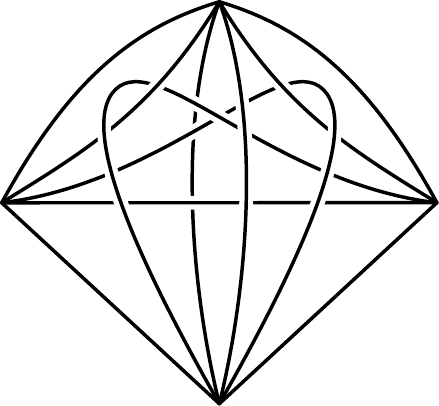}
\label{second_V-move}
}
\caption{V-moves used to make two bird beaks.} 
\label{V-move_in_place}
\end{figure}

Next, we need to add the other bird beak needed to make our triangular pillow. As before, we know that edges $e''$ and $\bar{e}''$ have degree at least three, and after our previous moves, $f$ now has degree at least three as well. So we can apply Lemma \ref{V-move_lemma} here, splitting apart the book of tetrahedra around $e''$ and obtaining Figure \ref{second_V-move}. Finally, using Lemma \ref{rotate_bird_beak}, we close the two bird beaks around each others mandibles, producing our triangular pillow. See Figure \ref{close_bird_beaks}.

\begin{figure}[htb]
\centering
\labellist
\small\hair 2pt
\pinlabel $e$ [t] at 45 58
\pinlabel $\bar{e}'$ [tl] at 95 28
\pinlabel $f$ [l] at 69 34
\pinlabel $\bar{e}''$ [tr] at 35 28
\endlabellist
\subfloat[After closing the first bird beak.]{
\includegraphics[width=0.4\textwidth]{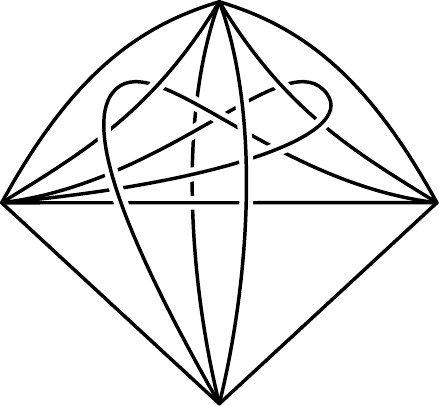}
\label{close_first_beak}
}
\qquad
\subfloat[After closing the second bird beak.]{
\labellist
\small\hair 2pt
\pinlabel $e$ [t] at 45 58
\pinlabel $\bar{e}'$ [tl] at 95 28
\pinlabel $f$ [l] at 69 34
\pinlabel $\bar{e}''$ [tr] at 35 28
\endlabellist
\includegraphics[width=0.4\textwidth]{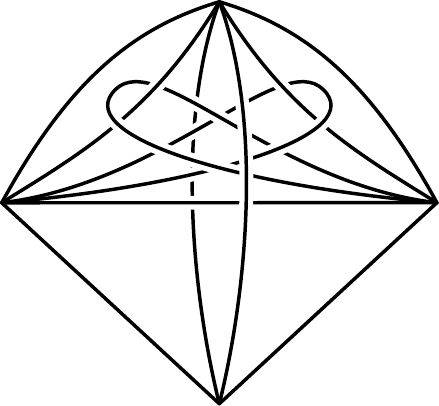}
\label{close_second_beak}
}
\caption{Closing the bird beaks.} 
\label{close_bird_beaks}
\end{figure}

To summarise, in this section we described a procedure to introduce a triangular pillow without introducing a degree one edge at any stage. This then completes the proof of Theorem \ref{main theorem}.

\bibliographystyle{../hamsplain}
\bibliography{../henrybib} 
\end{document}